\renewcommand{\arraystretch}{1}
\newcolumntype{C}[1]{>{\centering\arraybackslash}p{#1}}
\newtheorem{theorem}{Theorem}[section]
\newtheorem{lemma}[theorem]{Lemma}
\newtheorem{corollary}[theorem]{Corollary}
\newtheorem{proposition}[theorem]{Proposition}
\newtheorem{conjecture}[theorem]{Conjecture}
\theoremstyle{definition}
\newtheorem{definition}{Definition}[section]
\newtheorem{remark}[theorem]{Remark}
\newtheorem{question}{Question}
\title[Genus 3 Hyperelliptic Jacobians over Finite Fields]{Hyperelliptic Jacobians in Isogeny Classes of Abelian Threefolds Over Finite Fields}
\author{Matvey~Borodin}
\address{Department of Mathematics, Massachusetts Institute of Technology, 182 Memorial Dr, Cambridge, MA 02139}
\email{matveyborodin1@gmail.com}
\author{Liam~May}
\address{Department of Mathematics, Massachusetts Institute of Technology, 182 Memorial Dr, Cambridge, MA 02139}
\email{liammay2@mit.edu}
\thanks{The first author was supported by the UROP Fund for First-Year and Sophomore students.}
\thanks{The second author was supported by the Paul E. Gray (1954) UROP Fund.}
\date{August 2025}
\newcommand{\F}{\mathbb{F}}
\newcommand{\Z}{\mathbb{Z}}
\begin{document}

\begin{abstract}
    We present new criteria that obstruct an isogeny class of abelian varieties over a finite field with a given Weil polynomial from containing a Jacobian of a genus-3 hyperelliptic curve. Based on our analysis of the Weil polynomials of three-dimensional abelian varieties over finite fields up to $\F_{25}$ using the data in the L-functions and Modular Forms Database \cite{lmfdb}, we conjecture a collection of apparent obstructions. We provide a survey of known and conjectured results related to this problem, and a detailed statistical analysis of these findings. We conjecture that two of these obstructions classify all isogeny classes asymptotically as $q \to \infty$.

\end{abstract}

\maketitle

\section{Introduction}

The Honda-Tate theorem \cite{HONDA, tate} associates to each isogeny class $\mathcal{A}$ of $g$-dimensional abelian varieties over $\F_q$ a unique degree-$2g$ Weil polynomial with roots of modulus $\sqrt{q}$.  The Weil polynomial of an abelian variety $A$ is the characteristic polynomial of its Frobenius endomorphism and is unique up to isogeny. The Jacobians of genus $g$ hyperelliptic curves over $\F_q$ form a subset of abelian varieties of particular interest, as hyperelliptic curves have proved, over the past decades, to be a rich area of research, both in connection with open problems in mathematics and cryptography and as standalone objects of interest. A natural question that arises is the following.

\begin{question} \label{question1}
    Given a Weil polynomial $f(x) = x^6 + sx^5 + tx^4 + ux^3 + tqx^2 + sq^2x + q^3$ of an isogeny class $\mathcal{A}$ of 3-dimensional abelian varieties over $\F_q$, does $\mathcal{A}$ contain the Jacobian of a genus 3 hyperelliptic curve? 
\end{question}

The analogous questions for genus 1 and 2 have been answered by \citeauthor{WATERHOUSE1969} \cite{WATERHOUSE1969} and \citeauthor{Howe2009} \cite{Howe2009}, respectively, in the form of short lists of easily computable arithmetic conditions. Answering Question~\ref{question1} for genus 3 curves results in substantially more complications, as has been noted by several authors, though some progress has been made. Question~\ref{question1} was answered for Jacobians of (not-necessarily hyperelliptic) curves in isogeny classes of supersingular abelian threefolds in characteristic 2 in \cite{nart2006}; however, the hyperelliptic case is quite different. Other results include the powerful ``Resultant-1'' method \cite{howe2022deducinginformationcurvesfinite}  discussed in Section~\ref{subsec:resultant_1}, an extremely common obstruction on coefficients modulo 2 \cite{costa2020}, and an obstruction on supersingular hyperelliptic curves over finite fields of characteristic 2 \cite{Scholten2001}. Additionally, if $f(x)$ factors as the product $f_E^3$, where $f_E$ is the Weil polynomial of an elliptic curve $E/\F_q$, there are a number of known obstructions in terms of the discriminant of $E$ found in \cite{SERRE1985, ZAYTSEV2014, ZAYTSEV2016}. There are also known obstructions stemming from \cite{howe2007improvedupperboundsnumber}. Algorithmic implementations of the last few of these, as well as some discussion regarding the results, can be found in \cite{davidIsogeny}. We also provide a detailed discussion of known obstructions in Section~\ref{sec:summary}. 

By thorough analysis of the data available on the L-functions and Modular Forms Database \cite{lmfdb}, we have collected a list of arithmetic obstructions to a Weil polynomial arising from a genus 3 hyperelliptic curve. In Section~\ref{sec:methodology}, we discuss our methodology for data analysis. All of our results are checked against the abelian variety data found on the \cite{lmfdb} (see \cite{dupuy2020isogenyclassesabelianvarieties} for more details about the LMFDB), which includes threefolds over finite fields up to $\F_{25}$.

In Section~\ref{sec:StatAndAsym}, we discuss the asymptotic relevance of the known and conjectured results. In particular, we claim that a small subset of the rules asymptotically classifies $100 \%$ of degree-6 Weil polynomials as $q \rightarrow \infty$.

Our findings are best viewed by separating results for fields of even and odd characteristic. In this paper, we give a broad overview of all our results, with the intent of presenting them as conjectures, discussing their effectiveness, and highlighting the remaining work to be done towards a complete classification. We outline our methodology used for analyzing the abelian variety data found on the LMFDB, which generalizes easily to arbitrary genus.  Additionally, we provide a census of all currently known results applicable to Question ~\ref{question1}, allowing this paper to serve as a complete reference for current progress on Question~\ref{question1}. In~\cite{borodinmayeven}, we give a proof of the main obstruction in characteristic 2 and provide an algorithm for enumerating hyperelliptic curves over characteristic 2 fields, a case that is omitted in \cite{howe2024enumeratinghyperellipticcurvesfinite}.

Our code is available at \url{https://github.com/bmatvey/genus_3_hyp_jacobians}.

\subsection*{Acknowledgments}

First and foremost, we would like to thank our wonderful mentor, David Roe, for suggesting this problem and supporting us throughout our endeavors with invaluable advice, both on the mathematics and our write-up. We would also like to thank Edgar Costa and Stefano Marseglia for taking the time to discuss our work with us and offer advice. We thank Everett Howe for reading a preliminary draft and offering invaluable remarks and observations regarding our results. Finally, we thank Professor Kiran Kedlaya for his help tracking down algorithms for characteristic 2 finite fields. The first author was supported by the UROP Fund for First-Year and Sophomore Students and the second author was supported by the Paul E. Gray (1954) UROP Fund while working on this project. 

\section{Methodology}\label{sec:methodology}

For each isogeny class of 3-dimensional abelian varieties over the finite fields $\F_q$ for $q \leq 25$, we used the following data from the \cite{lmfdb}:
\begin{enumerate}
    \item The associated $q$-Weil polynomial,
    \item The Newton polygon (and therefore the $p$-rank),
    \item The number of Weil polynomial factors,
    \item Whether the isogeny class contains a hyperelliptic Jacobian.
\end{enumerate}
We sorted the Weil polynomial data into categories based on these quantifiers. We performed extensive analysis of this data, with the intent of identifying conditions on the Weil coefficients in a given category that obstruct such isogeny classes from containing a hyperelliptic Jacobian. Candidate criteria were selected in the following manner: modular conditions, algebraic conditions, generalizations of results for genus 2, plotting, and inspection. 
\begin{enumerate}
    \item \textbf{Modular conditions:} Following the results in \cite{costa2020}, we automated the checking of all possible modular conditions of a similar type. In particular, for all $n \in \Z$ up to 10, we checked all conditions of the form $(s, t, u) \equiv (r_1, r_2, r_3) \pmod{n}$ for $(r_1, r_2, r_3) \in \Z/n\Z$. This yielded useful obstructions for $n = 2$ and 4, and would be worth investigating for higher genera. 

    \item \textbf{Algebraic conditions:} We automated checking for various algebraic conditions on the parameters, such as $p_1 s^{p_2} + p_3t^{p_4} + p_5 u^{p_6} = p_7$ and $p_1s + p_2t + p_3u + p_4s^2 + p_5t^2 + p_6u^2 = p_7$ for iterated $p_1, \ldots, p_7 \in \Z$ and $s, t, u \in \Z$ being the parameters classifying a Weil polynomial. This approach was limited by the speed of the computations (having more than seven parameters or large ranges for parameters was not realistic). The only interesting rule we were able to deduce from this approach was \textbf{1.3.N.0} (see Section~\ref{sec:summary}).
    
    \item \textbf{Generalization of genus 2 classification:} We used the results of \cite[Thm. 1.2]{Howe2009} as a blueprint for the form that a classification answering Question~\ref{question1} might take. Naturally, for each of the obstructions proved in \cite{Howe2009}, we automated searches to determine whether analogous rules applied to the case of genus 3. We discovered that only the rule concerning discriminants of products of powers of elliptic curves generalized easily to genus 3. The remaining rules either did not appear to generalize, or were subcases of the ``Resultant-1" obstruction, which we discuss in Section~\ref{subsec:resultant_1}.

    \item \textbf{Plotting:} Plotting the datasets in three dimensions allowed for easy identification of arithmetic patterns. We plotted data points corresponding to the three parameters of Weil polynomials, coloring data points based on whether the associated isogeny class contained a hyperelliptic Jacobian. The data was split by field, $p$-rank and factoring type. Generalizing this method to higher genera would be difficult due to dimensionality. 

    \item \textbf{Inspection:} Some rules became apparent by simple inspection of the numerical data. As the number of unclassified Weil polynomials in a given category approached zero, restricting our inspection to the unclassified data often allowed us to identify obstructions that had not been previously obvious.

\end{enumerate}

A given candidate rule was accepted as conjecture if for every Weil polynomial in the associated category satisfying the rule, the associated isogeny class did not contain a hyperelliptic Jacobian, and the number of such isogeny classes was sufficiently large. To be deemed ``sufficiently large'', a rule needed to have examples in the majority of fields to which it was applicable. An unavoidable flaw of this approach is that obstructions which apply to only a negligible number of isogeny classes were disregarded so as to avoid overfitting. As seen by analysis of the dimension 2 data on the LMFDB, some of the obstructions given in \cite{Howe2009} apply to a very small number of isogeny classes, and the same is likely true for dimension 3. 

It is possible that some of these obstructions are instances of overfitting and will not prove true for larger fields, but we expect that the majority will hold generally, since we made efforts to only accept obstructions that applied widely. We provide detailed statistics on the prevalence of all of the rules in Appendix~\ref{sec:stats}. The code used to generate the rules and statistics, and to automate the search for rules (eg. the plotting code), is available at \url{https://github.com/bmatvey/genus_3_hyp_jacobians}.

We also remark that based on our analysis, a complete classification analogous to the one for genus 2 in \cite{Howe2009} is likely unfeasible. As the proportion of unclassified data approached zero, it became increasingly hard to find further elementary obstructions. It appears that a complete classification in genus 3 would need to be significantly more complex than what was done in genus 2, rendering it impractical and unrealistic to write down such a classification. In Section~\ref{sec:StatAndAsym}, we argue that if we focus on the limiting case $q \rightarrow \infty$ we can mitigate this challenge by proving that a small subset of obstructions asymptotically classifies $100\%$ of Weil polynomials.

\section{Obstructions}\label{sec:summary}

In this section, we enumerate known and conjectured obstructions to an isogeny class of abelian varieties containing a Jacobian of a genus-3 hyperelliptic curve. In particular, an isogeny class of abelian varieties whose Weil polynomial satisfies any of the obstructions described below cannot contain a genus 3 hyperelliptic Jacobian. As detailed in Section~\ref{sec:methodology}, we have computationally verified that these hold for all Weil polynomials over finite fields up to $\F_{25}$.

For a Weil polynomial of an isogeny class of abelian varieties, $f(x) = x^6 + sx^5 + tx^4 + ux^3 + qtx^2 + q^2sx + q^3$, we use $s, t, u$ to refer to the coefficients characterizing the polynomial. It is often computationally simpler to consider the real Weil polynomial, which is the unique monic polynomial $g(x)$ such that $f(x) = x^3 g(x + q/x)$. If $g(x)$ factors into three linear terms (so $f(x)$ factors into three quadratics), we write $g(x) = (x + \alpha)(x + \beta)(x + \gamma)$ with $\alpha \leq \beta \leq \gamma$. When $g(x)$ factors into a linear term and a quadratic (so $f(x)$ factors into a quadratic and a quartic), we write $g(x) = (x+\alpha)(x^2 + \delta x + \epsilon)$. Finally, when $g(x)$ is irreducible, we write $g(x) = x^3 + ax^2 + bx + c$. We denote the cardinality of the field as $q$ and the characteristic as $p$. %The obstructions are numbered according to the scheme 

Each obstruction is given a label of the form 
\texttt{c.n.r.i}, where \texttt{c} is the characteristic modulo 2, \texttt{n} is the number of factors of $f(x)$ over $\Z$, \texttt{r} is the $p$-rank, and \texttt{i} is used to index obstructions within a given type. If the obstruction has no requirement for a descriptor, or the requirement is not trivially stated, we write ``N''. We also provide the number of Weil polynomials of abelian varieties over finite fields up to $\F_{25}$ for which each rule applies. Note that we use $\equiv_n$ to mean equivalence modulo $n$. Also note that \textbf{NP} refers to the Newton Polygon.

\setlength\LTleft{-0.5in} \label{table}
\setlength\LTright{-0.5in}
\setlength\LTcapwidth{\linewidth}
\renewcommand{\arraystretch}{1.2}
\begin{longtable}{@{\extracolsep{\fill}} C{0.7in} C{0.5in} C{0.6in} C{0.72in} C{0.9in} C{2in} C{0.6in} C{0.4in}} 

  \toprule
    \textbf{Label} & \textbf{Parity} & \textbf{Factors}  & \textbf{$p$-rank} & \textbf{Other} & \textbf{Rule} & \textbf{Isog Cl} & \textbf{Proof}\\
    \midrule\midrule
    
    \endfirsthead

    \multicolumn{7}{c}%
      {{\bfseries Obstructions (continued)}} \\
    \toprule
    \textbf{Label} & \textbf{Parity} & \textbf{Factors}
      & \textbf{$p$-rank} & \textbf{Other}
      & \textbf{Rule} & \textbf{Isog Cl} & \textbf{Proof}\\
    \midrule%\midrule
    \endhead
    \textbf{0.N.N.0} &
  $E$ & $\{1,2,3\}$ & $\{0,1,2,3\}$
    & 
    & $(s,t,u)\equiv_2(0,1,1),(1,0,1)$ or $(1,1,0)$
    & 33{,}370 & partial
  \\ \hline
    \textbf{0.N.0.0} &
  $E$ & $\{1,2,3\}$ & $0$ 
    &
    & NP slopes are $1/2$. See~\ref{subsec:supersingular}.
    & 164 & \checkmark
  \\ \hline
  
    \textbf{0.2.2.0} &
  $E$ & $2$ & $\{0,1,2,3\}$
    &
    & $\alpha=0,\;\epsilon=\pm3$
    & 42
  \\ \hline
\textbf{0.3.1.0} &
  $E$ & $3$ & $1$
    & $q$ nonsquare
    & $\gamma-\alpha \in\{1, \sqrt{pq}\}$ or $(\beta-\alpha,\gamma-\beta) \in \{(\sqrt{pq},1), (1,\sqrt{pq})\}$
    & 32
  \\ \hline
    \textbf{0.3.2.0} &
  $E$ & $3$ & $2$
    &
    & $\alpha+5>\gamma$, $(\alpha\le \beta\le \gamma)$
    & 90
  \\ \hline
    \textbf{0.3.2.1} &
  $E$ & $3$ & $2$
    &
    & $-p\cdot v_p(q)=\alpha$, $\beta<\gamma<p\,v_p(q)-1$
    & 24
  \\ \hline
    \textbf{0.3.2.2} &
  $E$ & $3$ & $2$
    &
    & $1-p \cdot v_p(q)<\alpha<\beta\le \gamma=p\,v_p(q)$
    & 24
  \\ \hline
    \textbf{1.N.N.0} &
  $O$ & $\{1,2,3\}$ & $\{0,1,2,3\}$
    &
    & $t\equiv_20 \text{ and } u\equiv_21$
    & 245{,}548 & \checkmark
  \\ \hline
    \textbf{1.1.0.0} &
  $O$ & $1$ & $0$
    & $p=q\equiv_31$, NP has first slope $1/3$
    & $b>-q$ and $b\text{ an even multiple of }q$
    & 6
  \\ \hline
    \textbf{1.1.0.1} &
  $O$ & $1$ & $0$
    & $q=p^2$
    & $(b=-3q\text{ and }c=\text{odd}\cdot q)$ or $(b\in\{-2q, -q\}\text{ and }|a|=2p)$
    & 34
  \\ \hline
    \textbf{1.2.1.0} &
  $O$ & $2$ & $1$
    & $q\text{ prime}$
    & $\alpha \text{ odd} \text{ and } (\delta,\epsilon)\equiv_4(0,2-2q)$
    & 52
  \\ \hline
    \textbf{1.2.2.0} &
  $O$ & $2$ & $2$
    & $q\text{ prime}$
    & $\delta\text{ odd} \text{ and } \epsilon \in\{\pm2,\pm3\}$
    & 136
 \\ \hline
    \textbf{1.3.1.0} &
  $O$ & $3$ & $1$
    & $q\text{ square}$
    & $s\text{ odd} \text{ and }|s|\le\sqrt{q}$
    & 38
  \\ \hline
    \textbf{1.3.1.1} &
  $O$ & $3$ & $1$
    & $q$ prime and $q \equiv_4 3$ and $q>3$
    & $|\alpha|\le3 \text{ and } |\gamma|\le3$
    & 24
  \\ \hline
   
    \textbf{1.3.2.0} &
  $O$ & $3$ & $2$
    & $p \neq q$
    & $\gamma = p\cdot v_p(q)$ and $(\beta-\alpha \in (1,3)$ or $(\alpha = -p\cdot v_p(q)$ and $\gamma - \beta \in (1, 3)))$
    & 64
  \\ \hline
    \textbf{1.3.2.1} &
  $O$ & $3$ & $2$
    & $q\equiv_41$
    & $(\alpha,\beta,\gamma)\in\{(-2,-2,0),(0,2,2)\}$
    & 8
  \\ \hline
    \textbf{1.3.N.0} &
  $O$ & $3$ & $\{0, 1, 2, 3\}$
    &
    & $\alpha^2+\beta^2+\gamma^2=9$
    & 78
    
  \\ \hline
  \textbf{1.3.N.1} &
  $O$ & $3$ & $\{0,1,2,3\}$
    &
    & $(\alpha,\beta,\gamma)\in\{(-4,-1,0),(0,1,4),$ $(-4,-3,0), (0,3,4),$ $(-3,-2,0),(0,2,3),$ $(-1,0,2),(-2,0,1)\}$
    & 68
  \\ \hline
    \textbf{N.N.N.0} &
  $O,E$ & $\{1,2,3\}$ & $\{0,1,2,3\}$
    &
    & Illegal point counts. Details in Subsection~\ref{subsec:point_count}.
    & 1422 & \checkmark
  \\ \hline
    \textbf{N.N.N.1} &
  $O,E$ & $\{1,2,3\}$ & $\{0,1,2,3\}$
    &
    & ``Resultant-1'' method. Details in Subsection~\ref{subsec:resultant_1}.
    & 4187 & \checkmark
  \\ \hline
    \textbf{N.N.N.2} &
  $O,E$ & $\{1,2,3\}$ & $\{0,1,2,3\}$
    & $q$ square
    & ``Type'' obstruction. Details in Subsection~\ref{subsec:type}.
    & 2328 &\checkmark
  \\ \hline
    \textbf{N.3.N.0} &
  $O,E$ & $3$ & $\{0,1,2,3\}$
    &
    & ``Discriminant'' obstruction. Details in Subsection~\ref{subsec:disc}.
    & 78 & partial
  \\ \hline
    \textbf{N.3.0.0} &
  $O,E$ & $3$ & $0$
    & $p\in\{2,3,5\}$
    & $|\alpha|\neq p\cdot v_p(q) \text{ and } |\gamma|\neq p\cdot v_p(q)$
    & 47 
  \\ \hline
    \textbf{N.3.0.1} &
  $O,E$ & $3$ & $0$
    &
    & $(\alpha,\beta)=(-p\cdot v_p(q),-p \cdot v_p(q))$ or $(\beta,\gamma)=(p\cdot v_p(q),p\cdot v_p(q))$
    & 50
  \\ \hline

\caption{A summary of all known and conjectured obstructions on the existence of a hyperelliptic curve in the isogeny class of abelian varieties defined by a given Weil polynomial.}\label{table1}
\end{longtable}

\subsection{Discussion of Results} 

From the statistics in the above table, it is clear that rules \textbf{0.N.N.0} and \textbf{1.N.N.0} are by far the most widely applicable. In \cite[Thm 2.8]{costa2020}, \citeauthor{costa2020} prove rule \textbf{1.N.N.0}: for an isogeny class of 3-dimensional abelian varieties over an odd characteristic finite field, if the Weil polynomial $x^6 + sx^5 + tx^4 + ux^3 + tqx^2 + sq^2x + q^3$ has $t \equiv 0 \pmod{2}$ and $u \equiv 1 \pmod{2}$, the isogeny class cannot contain the Jacobian of a hyperelliptic curve. We present the analogous rule \textbf{0.N.N.0} for characteristic 2 finite fields, namely that the modular conditions $(s,t,u)\equiv(0,1,1),(1,0,1)\text{ or }(1,1,0)\pmod2$ provide obstructions. We prove the result for the first two residues in \cite{borodinmayeven}. 

In general, these rules classify almost 95\% of isogeny classes that do not contain hyperelliptic Jacobians in the data currently on the \cite{lmfdb} (finite fields up to $\F_{25}$). However, the remaining isogeny classes have proved increasingly hard to distinguish using elementary conditions, so it seems exceedingly unlikely that a table of arithmetic obstructions as in \cite{Howe2009} can be achieved for genus 3. 

In the remainder of this section, we discuss a few obstructions that are special cases of known results, and survey all known results by other authors for genus 3 curves.

\subsubsection{\textbf{Point Counting}}\label{subsec:point_count}

Recall that given a genus 3 hyperelliptic curve $C$ over $\F_q$, whose Jacobian has Weil polynomial with roots $\{\alpha_i\}$, the number of points on $C$ over $\F_{q^n}$ is given by $\#C(\F_{q^n})=1+q^n-\sum \alpha_i^n$. Given a Weil polynomial $f(t)$, if for any $n$ the ``predicted" number of $\F_{q^n}$ points is negative, it cannot arise from a hyperelliptic Jacobian. Note that $\sum\alpha_i$ is the trace of Frobenius, which is the coefficient $s$ in $f(t)$, so the condition $s > 1 + q$ is an obstruction. For any integer $n$, $\sum\alpha_i^n > 1 + q^n$ is also an obstruction, which is exactly rule \textbf{N.N.N.0}. These criteria can be computed explicitly using Newton's identities, which we provide in Appendix~\ref{sec:Newton}. We show that point-counting obstructions need only be checked over low-degree extensions.

\begin{proposition}
    Given a genus 3 Weil polynomial $f$ over $\F_q$ with roots $\{\alpha_i\}$,  $\sum \alpha_i^n > 1+q^n$ is only possible for $n=1$.
\end{proposition}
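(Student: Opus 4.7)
My plan is a Weil-bound argument supplemented by a short finite case check. Since each of the six roots $\alpha_i$ of $f$ satisfies $|\alpha_i| = \sqrt{q}$, the triangle inequality gives $\sum_{i=1}^{6}\alpha_i^n \le 6q^{n/2}$. For $\sum\alpha_i^n > 1+q^n$ to be possible, this forces $6q^{n/2} > 1 + q^n$, i.e.\ $q^n - 6q^{n/2} + 1 < 0$. Viewing this as a quadratic in $y = q^{n/2}$ with roots $y = 3 \pm 2\sqrt{2}$, we obtain the necessary condition $q^{n/2} < 3 + 2\sqrt{2}$, hence $q^n < 17 + 12\sqrt{2} < 34$.

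For $n \ge 2$ this restricts attention to a short finite list of pairs: namely $q = 2$ with $n \in \{2,3,4,5\}$, $q = 3$ with $n \in \{2,3\}$, and $q \in \{4,5\}$ with $n = 2$. Outside of this list, the Weil bound itself gives $\sum\alpha_i^n \le 6q^{n/2} \le 1 + q^n$, so the proposition's inequality is impossible. For the remaining small-$(q,n)$ cases, the plan is a direct enumeration: for each such $q$, the set of integer Weil polynomials of genus-$3$ abelian varieties over $\F_q$ is finite and already tabulated in the LMFDB dataset used throughout the paper. Computing $p_n = \sum\alpha_i^n$ for each candidate via Newton's identities (see Appendix~\ref{sec:Newton}) and checking $p_n \le 1 + q^n$ closes the argument.

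The main obstacle will be the small-$q$ regime, where the naive Weil bound is not sharp enough and one must exploit the integrality of the coefficients together with the pairing $\alpha_i\bar\alpha_i = q$ to rule out the extremal configurations. An analytic refinement --- writing $p_n = 2q^{n/2}\sum_{j=1}^{3}\cos(n\theta_j)$ and using integrality of the low-order power sums to eliminate the borderline $\theta_j$ values --- could in principle handle these cases uniformly; otherwise a direct finite enumeration suffices.
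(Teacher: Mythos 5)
Your proposal is correct and follows essentially the same route as the paper: the bound $\bigl|\sum\alpha_i^n\bigr|\le 6q^{n/2}$ forces $q^{n/2}+q^{-n/2}<6$ (equivalently your quadratic condition $q^{n/2}<3+2\sqrt{2}$), which rules out all $n\ge 2$ once $q\ge 7$, and the finitely many remaining small-$(q,n)$ cases are then disposed of by direct computation against the LMFDB data. Your version is slightly more explicit in listing the surviving pairs $(q,n)$, but the argument is the same.
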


\begin{proof}
    Let the roots of $f$ be $\{\alpha_k = \sqrt{q} e^{i\theta_k}\}$. Suppose $\sum \alpha_k^n > 1+q^n$ for some $n$. Then $q^{n/2}\sum e^{i n \theta_k} > 1 + q^n = q^{n/2}(q^{-n/2}+q^{n/2})$. Equivalently, $\sum e^{i n \theta_k} > q^{-n/2}+q^{n/2}$. But the real part of the left-hand-side is at most $2g$, so for genus 3, if $n>1$, the inequality is impossible for all $q\geq7$. By direct computation using the data from the LMFDB, we have verified that for fields with $q<7$,  $\sum \alpha_i^n > 1+q^n$ is possible only for $n=1$.
\end{proof}

Additionally, given $\F_{q^n} \subsetneq \F_{q^{mn}}$, the point-count identity $\#C(\F_{q^{mn}}) \geq \#C(\F_{q^n})$ must hold. This yields the additional obstructions
$\sum\alpha_i^{mn} - \sum\alpha_i^n > q^{mn}-q^n$, which can be rewritten as elementary criteria on $u,s,t,$ and $q$, using Newton's identities (Appendix~\ref{sec:Newton}).

\begin{proposition}
    Point counting obstructions of the form $\#C(\F_{q^{mn}}) < \#C(\F_{q^n})$ can occur only for $q^n \leq 8$. In particular, if $q > 2$ then this is only possible for $n = 1$. 
\end{proposition}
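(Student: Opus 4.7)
The plan is to mirror the proof of the preceding proposition by invoking the Weil bound. Translating $\#C(\F_{q^{mn}}) < \#C(\F_{q^n})$ through $\#C(\F_{q^N}) = 1 + q^N - \sum_k \alpha_k^N$, the obstruction (with $m \geq 2$, since $\F_{q^n} \subsetneq \F_{q^{mn}}$) is equivalent to
\[
\sum_k \alpha_k^{mn} - \sum_k \alpha_k^n > q^{mn} - q^n,
\]
where $\{\alpha_k\}$ are the Weil roots. Applying the triangle inequality together with $|\alpha_k| = \sqrt{q}$ bounds the left-hand side by $6 q^{mn/2} + 6 q^{n/2}$, so the obstruction forces the elementary inequality
\[
q^{mn} - q^n < 6 q^{mn/2} + 6 q^{n/2}.
\]

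I would then substitute $y = q^{n/2} \geq \sqrt{2}$. Completing the square, the inequality rearranges as $(y^m - 3)^2 < (y+3)^2$, and since $y+3 > 0$ this collapses to $|y^m - 3| < y + 3$; the lower bound $-y < y^m$ is automatic, so everything reduces to the single scalar condition $y^m - y < 6$. A short case check in $m$ then finishes the job: $m = 2$ gives $y < 3$, hence $q^n \leq 8$; $m = 3$ gives $y < 2$, hence $q^n \leq 3$; $m \in \{4,5\}$ force $y = \sqrt{2}$, i.e.\ $q^n = 2$; and for $m \geq 6$ even $y = \sqrt{2}$ fails, since then $y^m - y \geq 8 - \sqrt{2} > 6$. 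In every case $q^n \leq 8$, and if $q > 2$ (hence $q \geq 3$) this immediately forces $n = 1$.

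The substantive step is spotting the ``complete the square'' factorization that cleanly collapses the degree-$2m$ inequality into the one-line condition $y^m - y < 6$; after that, the rest is arithmetic enumeration of a handful of values of $m$, and unlike the preceding proposition no appeal to the LMFDB data is needed because the strict Weil bound already rules out the boundary case $q^n = 9$ on the nose.
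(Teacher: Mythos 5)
Your proof is correct and takes essentially the same route as the paper's: bound the power sums $\sum_k \alpha_k^N$ by $2g\,q^{N/2} = 6q^{N/2}$ using $|\alpha_k|=\sqrt{q}$, and reduce the point-count inequality to an elementary inequality in $q^{n}$ and $m$. Your completing-the-square reduction to $y^m - y < 6$ and the ensuing case check in $m$ are a clean way to justify the final step, which the paper compresses into the bare assertion that the inequality forces $q^n \leq 8$.
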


\begin{proof}
    The relation $q^{mn} - q^n < \sum\alpha_i^{mn} - \sum\alpha_i^n$ yields $q^{mn/2} - q^{n(1-m/2)} < \sum e^{in\theta_k} - \sum q^{\frac{n}{2} - \frac{mn}{2}}e^{i\theta_k}$. Taking real parts, we obtain $q^{\frac{mn}{2}}-q^{n(1-\frac{m}{2})} < 6(1 +q^{n(\frac{1-m}{2})})$. This is only possible for $q^n \leq 8$.
\end{proof}

Since a hyperelliptic curve is a double cover of the projective line, we obtain the natural bound $\#C(\F_{q^n}) \leq 2(q^n+1)$. Additionally, since $|\alpha_i|\leq\sqrt{q}$, we also have the bound $\#C(\F_{q^n}) \leq q^n+1+2g \sqrt{q^n},$ which is a superior bound for $q>32$ in genus 3, and is trivially always satisfied by a Weil polynomial. Thus, the obstructions $\sum\alpha_i^n < -(q^n+1)$ apply only for $q\leq32$.

\begin{proposition}
    For $q\geq7$, obstructions of the form $\sum\alpha_i^n < q^n+1$ only occur when $n=1$ for fields up to $\F_{32}$, or when $n=2$ for fields up to $\F_5$.
\end{proposition}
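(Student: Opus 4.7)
The approach mirrors the proof of the first proposition in this subsection. Writing $\alpha_k = \sqrt{q}\, e^{i\theta_k}$ and rearranging the obstruction $\sum_k \alpha_k^n < -(q^n+1)$ gives
\[
  \sum_k e^{in\theta_k} < -q^{-n/2} - q^{n/2}.
\]
Since the roots of $f$ come in complex-conjugate pairs, the sum on the left is a real number of absolute value at most $2g = 6$. The inequality therefore forces $q^{n/2} + q^{-n/2} < 6$, equivalently $q^n - 6q^{n/2} + 1 < 0$. Setting $x = q^{n/2}$, the quadratic $x^2 - 6x + 1 < 0$ holds exactly when $x < 3 + 2\sqrt{2}$, so we obtain $q^n < (3+2\sqrt{2})^2 = 17 + 12\sqrt{2} < 34$.

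I would then do a straightforward case analysis on $n$. For $q \geq 7$ and $n \geq 2$ we have $q^n \geq 49 > 34$, so the obstruction cannot occur, leaving only $n = 1$ in the range $q \leq 32$. For $n = 2$ the condition $q^2 < 34$ forces $q \leq 5$, and for $n \geq 3$ the condition $q^n < 34$ only permits a handful of small pairs (essentially $q \leq 3$, plus $q=2$ with $n \leq 5$).

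Finally, for the residual small-field regime $q \leq 5$ with $n \geq 2$ (and the few $n \geq 3$ cases at $q \leq 3$), the analytic bound alone does not tell us whether an actual Weil polynomial realizes the obstruction, so I would close the proof with direct verification against the LMFDB data, exactly as in the concluding sentence of the proof of the first proposition. The main obstacle is purely bookkeeping in this finite enumeration; conceptually the proof reduces, just like the previous two, to the Weil bound combined with a single quadratic inequality.
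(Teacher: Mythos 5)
Your proposal is correct and follows essentially the same route as the paper: the paper's proof also takes real parts, uses the bound $|\sum e^{in\theta_k}|\le 6$ to obtain $q^{n/2}+q^{-n/2}\le 6$, and concludes (you correctly read the obstruction as $\sum\alpha_i^n < -(q^n+1)$, matching the preceding discussion despite the sign omitted in the statement). Your version merely spells out the quadratic $q^n < 17+12\sqrt{2}$ and the resulting case split, which the paper compresses into ``the result follows.''
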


\begin{proof}
    Taking real parts of the above inequality, we have $-6 \leq -(q^{\frac{n}{2}} + q^{\frac{-n}{2}})$. The result follows.
\end{proof}

\subsubsection{\textbf{The Resultant-1 Method}}\label{subsec:resultant_1}

As a direct corollary of the Honda-Tate Theorem, if the Weil polynomial $f$ of an abelian variety $A$ factors nontrivially as $f = f_1f_2$, then $f_1$ and $f_2$ are the Weil polynomials of abelian varieties $B_1$ and $B_2$ such that $A$ is isogenous to $B_1 \times B_2$. There is a powerful obstruction that prevents one from ``gluing'' a pair of abelian varieties to produce a curve, called the Resultant-1 method \cite[Thm. 3.6]{howe2022deducinginformationcurvesfinite}, which is as follows. Define the resultant of two polynomials to be the determinant of their Sylvester matrix. Suppose a Weil polynomial $f(x)$ has real Weil polynomial $h(x)$ where $h(x) = h_1(x)h_2(x)$ over $\Z$ such that the resultant of $h_1$ and $h_2$ is $\pm1$. Then $f$ is not the Weil polynomial of a Jacobian.
Applying this result to 3-dimensional abelian varieties, we have the following two cases:
\begin{enumerate}
    \item If $f$ can be factored as the product of an irreducible quadratic and an irreducible quartic over $\Z$, then the associated real Weil polynomial is $h_1(x)h_2(x) = (x + \alpha)(x^2 + \delta x + \epsilon)$. Computing the resultant of $h_1$ and $h_2$ gives $\alpha^2 - \alpha\delta + \epsilon = \pm1$ as an obstruction.
    \item If $f$ can be factored as $f(x) = (x^2 + \alpha x + q)(x^2 + \beta x + q)(x^2 + \gamma x + q)$ over $\Z$, then the associated real Weil polynomial is $h(x) = (x + \alpha)(x + \beta)(x + \gamma)$. Computing the resultants of the three decompositions of $h$ as a product of two polynomials, any of $(\beta - \alpha)(\gamma - \alpha) = \pm 1$, $(\alpha - \beta)(\gamma - \beta) = \pm 1$, or $(\alpha - \gamma)(\beta - \gamma) = \pm 1$ are obstructions.
\end{enumerate}

It turns out some of the other rules can actually be explained by the resultant-1 method as well. We list these partial explanations below:

\begin{itemize}
    \item In rule \textbf{0.3.1.0}, in the first case ($\gamma - \alpha = 1$), we have either $\alpha = \beta$ and $\gamma = \alpha + 1$ or $\beta = \alpha + 1$ and $\gamma = \beta$. This gives real Weil polynomial factorings of $(x+\alpha)^2(x + \alpha + 1)$ and $(x + \alpha)(x + \alpha + 1)^2$, both of which give a Resultant-1 obstruction.
    \item In rule \textbf{0.3.1.1}, because the $p$-rank is 1, two of $\alpha, \beta, \gamma$ must be 0. If the nonzero value is $\pm 1$, this rule is explained by the Resultant-1 method. 
\end{itemize}

\subsubsection{\textbf{Type Obstructions}}\label{subsec:type}
 In \cite{howe2007improvedupperboundsnumber}, Howe and Lauter define the ``type" $[x_1, x_2, x_3]$ of an abelian variety, where $x_i = -(\alpha_i + \bar{\alpha_i})$ for the pairs $\alpha_i, \bar{\alpha_i}$ of roots of the corresponding Weil polynomial. Corollary 12 of the same paper provides the following obstruction. There is no genus 3 curve over $\F_{q^2}$ of type $[2q,2q,2q-n]$, where $n$ is squarefree and prime to $q$. In \cite[Thm 3.15]{howe2022deducinginformationcurvesfinite}, Howe provides the following generalization of this result. Over a field $\F_{q^2}$, if a real Weil polynomial can be written as $h_0(x)(x\pm2q)^n$ for some $n$, where $h_0$ is a nonconstant ordinary real Weil polynomial, and $h_0(2q)$ is squarefree, then every principally polarized variety in the isogeny class is decomposable. In \S 3.2 of the same paper, it is noted that every principally polarized Jacobian variety is indecomposable; thus the above hypothesis serves as the desired obstruction.

Note that this obstruction can also be used to partially explain some of the other obstructions. For example, in rule \textbf{0.3.2.0}, if $q = s^2$ for an integer $s$, and $\alpha = -2s$, $\beta = \alpha + 1$, $\gamma = \alpha + 3$, this obstruction applies. However, this does not seem to be a very significant overlap. 

\subsubsection{\textbf{Discriminant Obstructions}}\label{subsec:disc}

Consider the case in which a degree $2g$ $q$-Weil polynomial factors as $f_E^g$, where $f_E$ is the Weil polynomial of an elliptic curve $E/\F_q$ . In this section we discuss known obstructions on which curves $E$ yield a Weil polynomial of the form $f_E^3$. Given a Weil polynomial over $\F_q$ of the form $(x^2+\alpha x+q)^3$, where we note that by the Honda-Tate theorem $x^2+\alpha x+q$ necessarily describes the Weil polynomial of an elliptic curve $E$, the usual discriminant of $E$ is defined as $d =\alpha^2-4q$. Serre \cite{SERRE1985} showed that for genus 3, $d = -3 \text{ or } -4$ is an obstruction, using a result of Beauville. Further, $d=-8  \text{ or } d=-11$ were shown to be obstructions by \citeauthor{ZAYTSEV2014} in \cite{ZAYTSEV2014, ZAYTSEV2016}. We conjecture the following longer list of values of such obstructions: $d \in \{0,-3, -4, -7, -8, -11,$ $-19, -20,-23,-27,-35,$ $-39,-43, -51,-59,-67,$ $-75,-83,-91,-99\}$.

\subsubsection{\textbf{Characteristic 2 Supersingular Curves}}\label{subsec:supersingular}

Hyperelliptic curves over finite fields of characteristic 2 take the form $y^2 + h(x)y = f(x)$, where $\deg(h) \leq g$ and $\deg(f) = 2g+1 \text{ or } 2g+2$. 

\begin{proposition} \label{prop:sschar2}
    Given a Weil polynomial $f(x) = x^6+sx^5+tx^4+ux^3+qtx^2+q^2sx+q^3$ of 2-rank 0, which arises from the Jacobian of a hyperelliptic curve $C$ of the above form over the field $\F_q$ where $q=2^r$, the Newton polygon has first slope $\frac13$.
\end{proposition}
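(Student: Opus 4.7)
The plan is to reduce the claim to excluding a single Newton polygon and then invoke a known result specific to hyperelliptic curves in characteristic~$2$. First, I would enumerate the admissible Newton polygons of a three-dimensional abelian variety of $p$-rank zero. By the symmetry of such Newton polygons (slopes pair up to sum to~$1$) together with the requirement that no slope equal $0$ or $1$, running through the list of polarizable isoclinic pieces of total dimension~$3$ leaves exactly two possibilities: the supersingular polygon $(1/2)^6$ and the polygon with slopes $(1/3,1/3,1/3,2/3,2/3,2/3)$. Since the latter has first slope~$1/3$, the proposition reduces to showing that a genus~$3$ hyperelliptic Jacobian over $\F_q$ with $q = 2^r$ cannot be supersingular.

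Next, I would put $C$ in a useful normal form. Applying the Deuring--Shafarevich formula to the degree~$2$ Artin--Schreier cover $C \to \P^1$ yields that the $2$-rank of the Jacobian equals $R - 1$, where $R$ is the number of ramified points on~$C$. The hypothesis of $2$-rank zero therefore forces $R = 1$, so after a M\"obius change of coordinates on the $x$-line placing the unique ramified point at $\infty$ we may take $h(x) = 1$ and $\deg f = 7$. Using the standard Artin--Schreier reductions (replacing $f$ by $f + g^2 + g$ for $g \in \F_q[x]$) we may further assume $f(x) = a_7 x^7 + a_5 x^5 + a_3 x^3 + a_1 x$ with $a_7 \ne 0$.

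With $C$ in this form, I would conclude by invoking Scholten's theorem \cite{Scholten2001}, which rules out supersingular hyperelliptic curves of genus~$3$ in characteristic~$2$. Alternatively, one can appeal to the Newton polygon formula of van der Geer and van der Vlugt for Artin--Schreier covers $y^2 + y = f(x)$, which expresses the slopes in terms of a quadratic form built from the coefficients $(a_7, a_5, a_3, a_1)$, and verify directly that this form cannot produce the isoclinic slope~$1/2$ throughout.

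The main obstacle is precisely this final exclusion step: the Newton polygon enumeration and the reduction to a normal form are essentially bookkeeping, but ruling out the supersingular crystal requires genuine input from the theory of Artin--Schreier covers (Scholten's theorem, or equivalently the van der Geer--van der Vlugt computation). Without black-boxing Scholten, the substantive work is to analyze the Frobenius action on the Dieudonn\'e module of $J_C$ in the normal form above and show that its slopes cannot all equal~$1/2$, which is presumably what is carried out in~\cite{borodinmayeven}.
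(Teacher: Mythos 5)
Your proposal is correct and rests on the same essential input as the paper: the paper's entire proof is the citation of Scholten--Zhu \cite[Thm.~1.2]{Scholten2001}, which is exactly the black box you invoke at the end. The scaffolding you add (the enumeration showing that a $2$-rank-$0$ abelian threefold has Newton polygon either $(1/2)^6$ or $(1/3)^3(2/3)^3$, and the Deuring--Shafarevich reduction to the one-branch-point normal form) is accurate and makes explicit what the paper leaves implicit, but it does not constitute a different route.
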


The above result follows from \cite[Thm. 1.2]{Scholten2001}. Thus, if a Weil polynomial has a Newton polygon with all slopes equal to $\frac12$, we may conclude that it does not arise from an isogeny class containing a hyperelliptic Jacobian. The converse of Proposition ~\ref{prop:sschar2} appears to be true from inspection of the data. 
\begin{conjecture}
    Every 3-dimensional abelian variety over a finite field of characteristic 2 whose Weil polynomial has Newton polygon with first slope $\frac{1}{3}$ (equivalently satisfies $v_p(u) \geq r$) contains a genus 3 hyperelliptic Jacobian.
\end{conjecture}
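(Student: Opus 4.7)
The plan is to prove the conjecture by explicitly constructing, for every three-dimensional isogeny class with the prescribed Newton polygon, a genus 3 hyperelliptic Jacobian representative. In characteristic 2 such a curve has the Artin-Schreier form $y^2 + h(x)y = f(x)$, and the 2-rank of the Jacobian is governed by the zero divisor of $h$: for 2-rank 0 one may normalize to $h(x) = 1$ (equivalently, a single zero at $\infty$ of prescribed order), which reduces the problem to analyzing how the Weil polynomial of $y^2 + y = f(x)$ varies as $f$ ranges over an explicit parameter space. The first step is therefore to set up this parameterization precisely, following the conventions of the algorithm in \cite{borodinmayeven}.

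Next, I would compute the Weil polynomial as a function of $f$ using Artin-Schreier trace formulas (equivalently, via the $L$-function attached to the additive character $\psi \circ \mathrm{Tr}$ composed with $f$) and read off the Newton polygon directly from the Newton polygon of this $L$-function. Proposition~\ref{prop:sschar2} already guarantees that every such curve has first slope at least $1/3$, so after excluding the supersingular subfamily one lands inside the target stratum. I would then attempt to show that the induced map from this parameter space to the set of isogeny classes with first slope $1/3$ is surjective, by combining a dimension count (comparing the dimension of the parameter space, modulo the action of the hyperelliptic automorphism group, against the expected dimension of the Newton polygon stratum in $\mathcal{A}_3$) with a fibre analysis bounding how many $f$ can share a given Weil polynomial.

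The hardest step will be going from a dimension or point-count match to genuine surjectivity onto isogeny classes. A priori the map could collapse multiple parameters onto some isogeny classes while missing others; ruling this out seems to require either a deformation-theoretic argument identifying the closure of the hyperelliptic locus with an entire irreducible component of the Newton polygon stratum in $\mathcal{A}_3$, or an equidistribution-style result for Frobenius conjugacy classes of Artin-Schreier covers that is fine enough to reach every class. Oort's results on the Newton polygon stratification, together with irreducibility results for the slope-$1/3$ stratum, offer a promising framework: once the hyperelliptic locus is shown to be Zariski-dense in the stratum, Honda-Tate will finish the argument by promoting density to coverage of every $\F_q$-isogeny class. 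As a fallback that does not fully resolve the conjecture, the explicit algorithm of \cite{borodinmayeven} can be iterated to considerably larger $q$ than the $\F_{25}$ bound imposed by the current LMFDB data, providing strong additional evidence and bounding the residual work to a finite computation in each fixed $q$.
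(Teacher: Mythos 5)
First, note that the paper does not prove this statement: it is offered purely as a conjecture, supported by the observation that the converse of Proposition~\ref{prop:sschar2} holds for all the LMFDB data up to $\F_{25}$. So there is no proof in the paper to compare against, and your proposal must stand on its own.

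As it stands, it does not. The setup is reasonable (in characteristic $2$ a $2$-rank-$0$ hyperelliptic curve of genus $3$ can be normalized to the Artin--Schreier form $y^2+y=f(x)$, and Proposition~\ref{prop:sschar2} places the whole family inside the slope-$\tfrac13$ stratum; your aside about ``excluding the supersingular subfamily'' is unnecessary, since the proposition says such curves are never supersingular). But the decisive step --- passing from a dimension count or Zariski density of the hyperelliptic locus in the Newton polygon stratum of $\mathcal{A}_3$ to the conclusion that \emph{every} $\F_q$-isogeny class in that stratum contains a hyperelliptic Jacobian --- is a genuine gap, and it is the entire content of the conjecture. Isogeny classes are arithmetic, not geometric: a Zariski-dense (even open dense) sublocus of a stratum can perfectly well miss infinitely many $\F_q$-points and hence miss entire Weil polynomials, and Honda--Tate does nothing to ``promote density to coverage''; it only tells you which Weil polynomials are realized by abelian varieties, not by Jacobians. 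Likewise, equidistribution results for Frobenius in Artin--Schreier families control the limiting measure of traces, not whether every individual conjugacy class (let alone every integral Weil polynomial) is hit; this is exactly the kind of ``every class is attained'' statement that, even for elliptic curves, requires CM-theoretic input rather than equidistribution. Your fallback of running the algorithm of \cite{borodinmayeven} for larger $q$ is additional evidence, not a proof, and you acknowledge as much. In short, the proposal is a plausible research program whose key step is precisely the open problem, so it cannot be accepted as a proof; the statement should remain a conjecture.
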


\begin{remark}\label{rmk:supersingular}
    It appears that over finite fields of characteristic 2, isogeny classes of supersingular curves often do contain Jacobians of curves \cite{nart2006}, even though the above restricts these from being hyperelliptic. In particular, by \cite[Thm. 4.2]{nart2006}, all isogeny classes of simple supersingular abelian threefolds contain Jacobians, in contrast to Proposition ~\ref{prop:sschar2} for hyperelliptics. 
\end{remark}

\begin{remark}\label{rmk:finalremarks} 
    We observed, rather counterintuitively, that whether the factors of an abelian variety are isogenous to Jacobians is unrelated to whether the product variety is isogenous to a Jacobian. In particular, there exist 3-dimensional isogeny classes that contain a Jacobian which factor as the product of an isogeny class containing a Jacobian and an isogeny class not containing a Jacobian  \cite[\href{https://www.lmfdb.org/Variety/Abelian/Fq/3/23/c_e_do}{3.23.c\_e\_do}]{lmfdb})).  There also exist 3-dimensional isogeny classes not containing a Jacobian which factor into an isogeny class containing a Jacobian and not containing a Jacobian e.g. \cite[\href{https://www.lmfdb.org/Variety/Abelian/Fq/3/25/f_ay_ajl}{3.25.f\_ay\_ajl}]{lmfdb}.
\end{remark}

\section{Statistics and Asymptotics} \label{sec:StatAndAsym}

In this section we discuss the applicability of our rules according to the data for isogeny classes of abelian varieties over finite fields up to $\F_{25}$, and the asymptotic prevalence of the rules as $q \rightarrow \infty$. For complete statistics see Appendix~\ref{sec:stats}.

We conjecture that as $q \rightarrow \infty,$ for $q$ odd prime powers (resp. even prime powers), the rules \textbf{1.N.N.0} (resp. \textbf{0.N.N.0}) classify asymptotically 100\% of non-hyperelliptic Weil polynomials. To this end, we make the following claims.

\begin{conjecture} \label{conj:asymp}
    Let $\Pi_3(q)$ denote the proportion of isogeny classes of 3-dimensional abelian varieties over $\F_q$ which do not contain the Jacobian of a (genus 3) hyperelliptic curve. Let $q$ be an odd (resp. even) prime power, and let $\chi_3(q)$ denote the proportion of degree 6 $q$-Weil polynomials over $\F_q$ for which rule \textbf{1.N.N.0} (resp. \textbf{0.N.N.0}) is true. We claim the following:
    \begin{align}
    \lim_{\substack{q\to\infty \\ q\ \mathrm{odd}}} \Pi_3(q) 
    &= \lim_{\substack{q\to\infty \\ q\ \mathrm{odd}}} \chi_3(q) 
    = \frac{1}{4}, \label{eq:odd}\\
    \lim_{\substack{q\to\infty \\ q\ \mathrm{even}}} \Pi_3(q) 
    &= \lim_{\substack{q\to\infty \\ q\ \mathrm{even}}} \chi_3(q) 
    = \frac{1}{2}. \label{eq:even}
    \end{align}
    
\end{conjecture}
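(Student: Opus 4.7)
The plan is to split Conjecture~\ref{conj:asymp} into two claims: the value of $\lim_{q\to\infty} \chi_3(q)$, which is a lattice-point counting problem, and the equality $\lim_{q\to\infty} \Pi_3(q) = \lim_{q\to\infty} \chi_3(q)$, which is an asymptotic surjectivity statement for the hyperelliptic Torelli map at the level of isogeny classes. For the first: after rescaling $(\tilde s, \tilde t, \tilde u) := (s/\sqrt{q}, t/q, u/q^{3/2})$, the coefficients of degree-$6$ $q$-Weil polynomials lie in a fixed compact Weil region $R_\infty \subset \mathbb{R}^3$, and a standard lattice-point estimate shows the total count grows like $\sim c\, q^3$ for a positive constant $c = \mathrm{vol}(R_\infty)$. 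For odd $q$, no Honda-Tate or Newton-polygon condition constrains $(s,t,u) \pmod 2$, so the eight residue classes are asymptotically equidistributed and Rule \textbf{1.N.N.0} captures exactly two of them, giving $\chi_3(q) \to 1/4$. For even $q$, a computation with the $2$-adic Newton polygon shows that every non-ordinary isogeny class over $\F_{2^n}$ forces $v_2(u) \geq \lceil n/2 \rceil$ and so contributes only $O(q^{5/2})$ triples, a vanishing fraction of the total; within the dominant ordinary locus $u$ is odd, and among the four remaining residues Rule \textbf{0.N.N.0} captures exactly two (the third residue $(1,1,0)$ has $u$ even and is itself of vanishing density), giving $\chi_3(q) \to 1/2$.

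For the equality $\Pi_3(q) = \chi_3(q)$, the inequality $\Pi_3(q) \geq \chi_3(q) - o(1)$ follows immediately, since Rule \textbf{1.N.N.0} is a proved obstruction \cite{costa2020} and the portion of Rule \textbf{0.N.N.0} proved in \cite{borodinmayeven} covers both $u$-odd residues. The plan for the reverse is to show that every other entry in Table~\ref{table1} cuts out a set of density zero in $R_\infty$: the point-counting, resultant-$1$, type, and $\alpha^2+\beta^2+\gamma^2=9$ obstructions are cut out by polynomial (in)equalities in the rescaled coordinates (for instance $\alpha^2+\beta^2+\gamma^2 = 9$ becomes $\tilde s^2 - 2\tilde t = -6$ in the limit, a codimension-$1$ surface); the discriminant and factoring-dependent obstructions require $f$ to split over $\mathbb{Z}$, a codimension-$\geq 1$ condition in $R_\infty$; and the supersingular and low-$p$-rank obstructions apply only on lower-dimensional Newton-polygon strata. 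Granting that all non-modular obstructions contribute $o(q^3)$, it suffices to show that every ``generic'' Weil polynomial is realized by a hyperelliptic Jacobian. A natural approach uses Deligne-Katz equidistribution for the monodromy group $\mathrm{USp}(6)$ of the moduli stack $\mathcal{H}_3$ of genus-$3$ hyperelliptic curves; combined with $\#\mathcal{H}_3(\F_q) \sim 2q^5 \gg q^3$, this suggests each target Weil polynomial is realized $\sim q^2$ times on average.

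The principal obstacle is upgrading average equidistribution to pointwise surjectivity onto the complement of the obstruction loci. Katz-Sarnak-type estimates give equidistribution against smooth test functions but do not preclude sparse sets of Weil polynomials being missed entirely by the hyperelliptic locus; closing this gap appears to require either a quantitative equidistribution statement with error terms smaller than the density of a single Weil polynomial (roughly $q^{-3}$), or an explicit construction producing a hyperelliptic curve for each generic target Weil polynomial, both of which seem substantially out of reach. A tractable intermediate target would be an averaged form of the conjecture, say $\frac{1}{N}\sum_{q \leq N} (\Pi_3(q) - \chi_3(q)) \to 0$ along a suitable sequence, which might be accessible via existing equidistribution methods without the pointwise control the full conjecture requires.
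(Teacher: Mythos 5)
The statement you are addressing is presented in the paper as a conjecture, and the paper offers no proof of it: the authors attribute the second equality of \eqref{eq:odd} to Cor.~3.2 of \cite{costa2020}, prove the second equality of \eqref{eq:even} in the companion paper \cite{borodinmayeven}, and explicitly leave both equalities involving $\Pi_3(q)$ open. Your treatment of the $\chi_3$ limits follows essentially the same route as those references: lattice-point counting in the rescaled Weil region together with equidistribution of the coefficients modulo $2$, plus the observation (via DiPippo--Howe) that non-ordinary classes contribute $O(q^{5/2}) = o(q^3)$, which is precisely what makes the even-characteristic answer $1/2$ rather than the naive $3/8$ one would get from three residues out of eight. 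Your accounting of why the remaining obstructions in Table~\ref{table1} cut out density-zero sets is also sound and consistent with the paper's own Lemma~\ref{lemma:counting} and Corollary~\ref{cor:asymp}.

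The genuine gap is exactly where you locate it. The inequality $\Pi_3(q) \le \chi_3(q) + o(1)$ requires that almost every isogeny class whose Weil polynomial escapes the mod-$2$ obstruction actually \emph{contains} a hyperelliptic Jacobian; this is an existence statement that density-zero bounds on the other known obstructions cannot supply, since nothing rules out further, as-yet-unidentified obstructions of positive density. Katz--Sarnak equidistribution for the hyperelliptic family controls averages of the pushforward measure against continuous test functions and says nothing about whether an individual Weil polynomial (a set of measure roughly $q^{-3}$ in the rescaled region) lies in the image; the heuristic $\#\mathcal{H}_3(\F_q) \sim 2q^{5} \gg q^{3}$ is only an average count of preimages and cannot exclude a positive-density set of targets being missed entirely. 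So your plan does not close the conjecture, and you correctly say so. Since the paper also leaves this step unproved, your proposal should be read as a correct reduction of the conjecture to its one hard open ingredient --- pointwise (or suitably averaged) surjectivity of the hyperelliptic locus onto the unobstructed Weil polynomials --- rather than as a proof.
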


\begin{corollary}
    The proportion of isogeny classes of abelian varieties over $\F_q$ which are correctly classified by applying rules \textbf{1.N.N.0} and \textbf{0.N.N.0} to the associated Weil polynomials is asymptotically $100\%$ as $q\to\infty.$
\end{corollary}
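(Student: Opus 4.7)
The plan is to derive the corollary as a direct consequence of Conjecture~\ref{conj:asymp}, together with the fact that rules \textbf{1.N.N.0} and \textbf{0.N.N.0} are genuine obstructions: the former is proved in \cite[Thm.~2.8]{costa2020}, and two of the three residue cases of the latter are proved in \cite{borodinmayeven}. Let $W_q$ denote the set of degree-$6$ $q$-Weil polynomials, $N_q \subseteq W_q$ the subset whose isogeny class contains no genus-$3$ hyperelliptic Jacobian, and $S_q \subseteq W_q$ the subset caught by the applicable rule (\textbf{1.N.N.0} for $q$ odd, \textbf{0.N.N.0} for $q$ even). The obstruction property is precisely the inclusion $S_q \subseteq N_q$.

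Viewing the rule as a binary classifier, a Weil polynomial is misclassified if and only if it lies in $N_q \setminus S_q$: polynomials in $S_q$ are correctly declared non-Jacobian, and by the inclusion no polynomial outside $N_q$ is ever falsely flagged. Conjecture~\ref{conj:asymp} gives
\[
    \lim_{q \to \infty} \frac{|N_q|}{|W_q|} \;=\; \lim_{q \to \infty} \frac{|S_q|}{|W_q|} \;=\; L,
\]
with $L = 1/4$ in the odd case and $L = 1/2$ in the even case. Subtracting and invoking the inclusion yields $|N_q \setminus S_q|/|W_q| \to 0$, which is exactly the statement of the corollary.

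The real obstacle is therefore not the corollary itself but Conjecture~\ref{conj:asymp}. The limit for $\chi_3(q)$ should be accessible via an equidistribution statement for the coefficients $(s,t,u)$ of degree-$6$ $q$-Weil polynomials as $q \to \infty$: since each rule is a union of congruence classes of $(s,t,u)$ modulo $2$, its density reduces to counting lattice points of a fixed parity in the scaled Weil region. The harder half is the limit for $\Pi_3(q)$, which requires a matching \emph{lower} bound, namely that almost every Weil polynomial not excluded by the relevant modular rule is actually realized by a hyperelliptic Jacobian once $q$ is large enough. Establishing this existence statement, presumably via explicit constructions or enumeration arguments in the spirit of \cite{howe2024enumeratinghyperellipticcurvesfinite} and its characteristic-$2$ companion \cite{borodinmayeven}, is where the principal effort would lie.
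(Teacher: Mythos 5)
Your derivation is correct and matches the paper's intent: the paper gives no explicit proof, presenting the corollary as an immediate consequence of Conjecture~\ref{conj:asymp}, and your argument (the rule set $S_q$ is contained in the non-Jacobian set $N_q$, both densities converge to the same limit, hence the misclassified set $N_q \setminus S_q$ has vanishing density) is exactly the implicit reasoning. Your closing remarks on how one might attack the conjecture itself go beyond what the corollary requires, but they are consistent with the paper's framing, which conditions everything on Conjecture~\ref{conj:asymp} and the (partially proved) validity of the two obstructions.
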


In equation (1) of Conjecture~\ref{conj:asymp}, the second equality is the same as Cor. 3.2 in \cite{costa2020}, and the same paper conjectures the first equality as well. We prove the second equality of equation (2) in \cite{borodinmayeven}, and the first equality is left as conjectured. We note that these two statements together imply that the remaining rules vanish in probability as $q \to \infty$, which is evident upon inspection of their limiting behaviors; however, they remain useful for small $q$. 

Independently of this statement, we also consider the distribution of isogeny classes based on $p$-rank and factoring type. 

Concerning $p$-rank, we have the following corollary of \cite[Thm. 1.2]{dipippo2000realpolynomialsrootsunit}:

\begin{corollary}\label{lemma:rank}
    As $q \to \infty$, the ratio of the number of non-ordinary isogeny classes of $g$-dimensional abelian varieties to the total number of isogeny classes of $g$-dimensional abelian varieties is 0.
\end{corollary}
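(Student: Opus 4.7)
The plan is to combine the classical arithmetic criterion for ordinariness with the asymptotic lattice-point count provided by \cite[Thm.~1.2]{dipippo2000realpolynomialsrootsunit}. Recall that a $g$-dimensional abelian variety $A/\F_q$ with Weil polynomial $f(x) = x^{2g} + a_1 x^{2g-1} + \cdots + a_{2g}$ is ordinary if and only if $p \nmid a_g$; equivalently, the middle vertex $(g, v_p(a_g))$ of its Newton polygon lies on the $x$-axis, forcing the slope profile $(0^g, 1^g)$. Thus the non-ordinary isogeny classes are precisely those whose Weil polynomial satisfies $p \mid a_g$.

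By Theorem 1.2 of DiPippo--Howe, the degree-$2g$ $q$-Weil polynomials are parameterized by the integer lattice points $(a_1,\dots,a_g) \in \Z^g$ lying in an explicit bounded semialgebraic region $\Omega_q \subset \mathbb{R}^g$ (carved out by requiring all roots of $f$ to have modulus $\sqrt{q}$), and their total number is asymptotic to $v_g\, q^{g(g+1)/4}$ for an explicit constant $v_g > 0$. The strategy is to rerun their counting argument on the index-$p$ sublattice $\Z^{g-1} \times p\Z \subset \Z^g$: the region $\Omega_q$ is unchanged, but the covolume of the lattice is multiplied by $p$, so the leading term of the count scales by $1/p$, yielding $\sim (v_g/p)\, q^{g(g+1)/4}$ non-ordinary classes. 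Dividing gives a ratio of $\tfrac{1}{p}(1 + o(1))$, which tends to $0$ as $q \to \infty$ through regimes in which the characteristic $p$ also grows (for instance, when $q$ ranges over primes, where $p = q$).

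The main technical step is controlling the uniformity of the DiPippo--Howe error estimate on the sublattice: one must rerun their Davenport-style lattice-point count with the covolume-$p$ lattice and verify that the boundary contribution remains of strictly lower order than $q^{g(g+1)/4}/p$, uniformly in $p$, so that the main term $\sim 1/p$ genuinely dominates. A secondary subtlety worth flagging is that for fixed $p$ and $q = p^n \to \infty$ the ratio in fact limits to $1/p > 0$; so the corollary is most naturally read in the regime where the characteristic also grows, matching the convention under which DiPippo--Howe state their asymptotics. Under that convention, the argument above gives the claimed vanishing, and is easily extended from $g = 3$ to arbitrary $g$.
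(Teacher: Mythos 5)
The paper offers no argument of its own here: the statement is quoted as an immediate consequence of \cite[Thm.~1.2]{dipippo2000realpolynomialsrootsunit}, which asserts that the \emph{total} number of isogeny classes of $g$-dimensional abelian varieties over $\F_q$ has the same main term $v_g\,r(q)\,q^{g(g+1)/4}$ as the count of \emph{ordinary} classes, with an error of strictly lower order in $q$. That already forces the non-ordinary proportion to $0$ along every sequence $q\to\infty$, with no restriction on the characteristic.

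Your argument has a genuine gap, and the ``secondary subtlety'' you flag is in fact an error. The sublattice computation counts non-ordinary \emph{Weil polynomials}, i.e.\ lattice points of $\Omega_q$ with $p\mid a_g$, and correctly finds $\sim v_g\,q^{g(g+1)/4}/p$ of them. But by Honda--Tate this is only an upper bound for the number of non-ordinary \emph{isogeny classes}: a Weil polynomial with $p\mid a_g$ must in addition satisfy local conditions at $p$ (on its Newton polygon and on the invariants of the associated endomorphism algebras) in order to arise from an abelian variety of dimension exactly $g$, and these conditions eliminate all but a lower-order subset of such polynomials. Already for $g=1$ with $p$ fixed and $q=p^{2k}\to\infty$, there are roughly $4\sqrt{q}/p$ admissible traces divisible by $p$ but only $O(1)$ supersingular isogeny classes (Waterhouse), so the ratio tends to $0$, not to $1/p$. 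Hence your assertion that the ratio ``limits to $1/p>0$'' in fixed characteristic is false, and your proof establishes the corollary only in regimes where $p\to\infty$ (e.g.\ $q$ prime), whereas the statement as written---and as proved by DiPippo--Howe---holds for arbitrary $q\to\infty$. To repair the argument you would need the additional input that the non-ordinary Weil polynomials which actually correspond to isogeny classes number $O(q^{g(g+1)/4-1/2})$, which is precisely the content of the cited theorem rather than something recoverable from the covolume-$p$ lattice count alone.
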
 

Now, we count the number of isogeny classes that factor in a certain way. Using the asymptotic approximation found in~\cite[Thm. 1.1]{dipippo2000realpolynomialsrootsunit}, we can compute the expected number of isogeny classes of a given factoring type. 

\begin{definition}
    Let us define
    \[\mathcal{I}(g, q) = \left(\frac{2^g}{g!} \prod_{i=1}^{g} \left(\frac{2i}{2i - 1}\right)^{g + 1 - i}\right) r(x)q^{g(g+1)/4},\]
    where $r(x) = \phi(x)/x$ with $\phi(x)$ being Euler's totient function, as in~\cite[Thm.~1.1]{dipippo2000realpolynomialsrootsunit}. Further, denote $\#(2,2,2)(q)$, $\#(2, 4)(q), \#(6)(q)$ to be the predicted number of 3-dimensional isogeny classes which split as three elliptic curves, an elliptic curve and an abelian surface and a simple abelian threefold for a given field $\F_q$, respectively.
\end{definition}

\begin{lemma} \label{lemma:counting}
    We can express these quantities as:
    \begin{equation*}
    \begin{split}
        \#(2,2,2)(q) &= \binom{\mathcal{I}(1, q) + 2}{3} \sim q^{3/2}\\
        \#(2, 4)(q) &= \mathcal{I}(1, q) \left(\mathcal{I}(2, q) - \binom{\mathcal{I}(1, q) + 1}{2} \right) \sim q^2\\
        \#(6)(q) &= \mathcal{I}(3, q) - \#(2, 4)(q) - \#(2,2,2)(q) \sim q^3,
    \end{split}
    \end{equation*}
\end{lemma}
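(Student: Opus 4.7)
The plan is to use the Honda-Tate theorem to reduce each count to a multiset enumeration, then to substitute the DiPippo-Howe leading asymptotic for $\mathcal{I}(g,q)$.

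First, I would invoke Honda-Tate: each $3$-dimensional isogeny class over $\F_q$ decomposes uniquely (up to reordering) as a product of simple isogeny classes whose dimensions form one of the multisets $\{1,1,1\}$, $\{1,2\}$, or $\{3\}$, corresponding respectively to the splitting types $\#(2,2,2)(q)$, $\#(2,4)(q)$, and $\#(6)(q)$. Since each isogeny class appears in exactly one of the three bins, the counts are disjoint and their sum is $\mathcal{I}(3,q)$.

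For the first identity, an isogeny class of type $(1,1,1)$ is determined by an unordered size-$3$ multiset drawn from the $\mathcal{I}(1,q)$ elliptic curve isogeny classes, and the number of such multisets is $\binom{\mathcal{I}(1,q)+2}{3}$ by the standard stars-and-bars formula. For the second identity, an isogeny class of type $(1,2)$ is specified by an ordered pair consisting of an elliptic curve isogeny class (contributing $\mathcal{I}(1,q)$) and a \emph{simple} abelian surface isogeny class; the count of the latter equals $\mathcal{I}(2,q)$ minus the number of reducible abelian surface isogeny classes, which are exactly unordered pairs of elliptic curves and hence counted by $\binom{\mathcal{I}(1,q)+1}{2}$. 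The third identity then follows by subtracting the previous two counts from $\mathcal{I}(3,q)$.

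For the asymptotics, I would substitute the leading behavior $\mathcal{I}(g,q) \sim c_g\, q^{g(g+1)/4}$, giving $\mathcal{I}(1,q) \sim c_1 q^{1/2}$, $\mathcal{I}(2,q) \sim c_2 q^{3/2}$, and $\mathcal{I}(3,q) \sim c_3 q^{3}$. Then $\binom{\mathcal{I}(1,q)+2}{3} \sim \mathcal{I}(1,q)^{3}/6 \sim q^{3/2}$; the leading term of $\#(2,4)(q)$ is $\mathcal{I}(1,q)\mathcal{I}(2,q) \sim q^{2}$, since the subtracted correction $\binom{\mathcal{I}(1,q)+1}{2} \sim q$ is of strictly lower order than $\mathcal{I}(2,q) \sim q^{3/2}$; and $\#(6)(q) \sim \mathcal{I}(3,q) \sim q^{3}$, as the $q^{2}$ and $q^{3/2}$ subtractions are dominated.

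The main obstacle is minor and notational: $\mathcal{I}(g,q)$ is defined via the DiPippo-Howe leading asymptotic rather than the exact isogeny-class count, so the equalities in the lemma are strictly speaking asymptotic. Upgrading them to exact statements with controlled error amounts to tracking the lower-order terms from \cite[Thm.~1.1]{dipippo2000realpolynomialsrootsunit} through the multiset identities, which is routine bookkeeping rather than a genuine mathematical difficulty.
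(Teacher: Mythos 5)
Your proposal is correct and follows essentially the same route as the paper: identify each splitting type with multisets of simple factors via Honda--Tate, count the $(1,1,1)$ and $(1,2)$ types with multiset ($k$-multicombination) formulas, obtain the simple count by subtraction, and read off the asymptotics from the DiPippo--Howe estimate. Your closing remark that the identities are really asymptotic (since $\mathcal{I}(g,q)$ is itself an approximation to the true count) matches the spirit of the paper's appeal to the ``asymptotic bounds'' and is a fair caveat.
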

\begin{proof}
    Note that the set of abelian threefolds that split into three elliptic curves is in bijection with unordered triplets of elliptic curves, and the set of abelian threefolds that split into an elliptic and an abelian surface is in bijection with the set of pairs of an elliptic curve and a simple abelian surface. Then, the set of simple abelian threefolds is simply all of the remaining threefolds. Thus, using the asymptotic bounds on $N(g, q)$ found in \cite[Thm. 1.1]{dipippo2000realpolynomialsrootsunit} and $k$-multi-combination formulas, the result follows. 
\end{proof}

\begin{corollary} \label{cor:asymp}
    As a result of Lemmas~\ref{lemma:counting} and~\ref{lemma:rank}, as $q \rightarrow \infty$, if one is only interested in the asymptotic behavior, the set of isogeny classes of simple ordinary abelian varieties dominates. That is, the proportion of isogeny classes that are simple and ordinary approaches 1 as $q \to \infty$. 
\end{corollary}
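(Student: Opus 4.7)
The plan is to directly combine the two preceding ingredients and read off the asymptotics. By Lemma~\ref{lemma:counting}, the total count of three-dimensional isogeny classes over $\F_q$ satisfies $\mathcal{I}(3,q) \sim q^3$, whereas the reducible strata contribute only $\#(2,4)(q) \sim q^2$ and $\#(2,2,2)(q) \sim q^{3/2}$. Dividing, I would get
\[
\frac{\#(6)(q)}{\mathcal{I}(3,q)} \;=\; 1 - \frac{\#(2,4)(q) + \#(2,2,2)(q)}{\mathcal{I}(3,q)} \;\longrightarrow\; 1,
\]
so the proportion of simple isogeny classes tends to $1$.

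Next, I would invoke Corollary~\ref{lemma:rank} with $g = 3$: the proportion of non-ordinary isogeny classes of three-dimensional abelian varieties over $\F_q$ tends to $0$. To combine this with the previous step, observe that the event ``not (simple and ordinary)'' is contained in the union ``not simple'' $\cup$ ``not ordinary'', so a union bound yields
\[
1 - \frac{\#\{\text{simple and ordinary}\}(q)}{\mathcal{I}(3,q)} \;\leq\; \frac{\#(2,2,2)(q) + \#(2,4)(q)}{\mathcal{I}(3,q)} + \frac{\#\{\text{non-ordinary}\}(q)}{\mathcal{I}(3,q)}.
\]
Both terms on the right-hand side tend to $0$, so the left-hand side does as well, proving the claim.

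There is essentially no obstacle here beyond bookkeeping: one must ensure both ratios are taken against the same denominator $\mathcal{I}(3,q)$ and that the factor $r(x) = \phi(x)/x$ appearing in the definition of $\mathcal{I}(g,q)$ is harmless. Since $r$ is bounded away from $0$ and $\infty$ along any sequence of $q$'s, it does not affect the leading-order cancellation, and the union-bound estimate above remains valid uniformly. The conclusion that simple ordinary classes dominate is then immediate.
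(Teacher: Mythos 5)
Your argument is correct and matches what the paper intends: the corollary is stated without an explicit proof precisely because it is the immediate combination of Lemma~\ref{lemma:counting} (simple classes dominate, since $\#(2,4)(q)+\#(2,2,2)(q) = O(q^2)$ against a total of order $q^3$) and Corollary~\ref{lemma:rank} (non-ordinary classes are negligible), glued by the union bound you write down. Your added remark that $r(q)=1-1/p$ is bounded away from $0$ for prime powers is a sensible piece of bookkeeping that the paper leaves implicit.
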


Given the challenges in curating a complete classification of all isogeny classes of abelian threefolds, by Corollary~\ref{cor:asymp}, it is clear that in the limit $q \to\infty$ it may be more sensible to attempt a classification of only the simple ordinary abelian varieties. In the limiting case, we can disregard all rules in Table~\ref{table} that do not apply to simple Weil polynomials of $p$-rank 3. There are exactly five such rules, namely \textbf{0.N.N.0}, \textbf{1.N.N.0}, and the three additional classes of rules discussed in Sections~\ref{subsec:point_count},~\ref{subsec:resultant_1}, and~\ref{subsec:type}.

\newpage
\appendix

\section{Newton's Identities} \label{sec:Newton}

We include the following identities of symmetric polynomials due to Newton, which can be used to write the point-counting obstructions discussed in Section~\ref{subsec:point_count} in terms of the Weil parameters. Let $p_k(x_1,\dots,x_n) = \sum_{i=1}^{n}x_i^k$ denote the $k$-th power sum in $n$ variables. Let $s_k(x_1,\dots,x_n) = \sum_{i_1<\dots<i_k} (x_{i_1}\dots x_{i_k})$  denote the $k$-th elementary symmetric polynomial in $n$ variables. Given a Weil polynomial $p(x) = x^6+sx^5+tx^4+ux^3+qtx^2+q^2sx+q^3$ over $\F_q$ with roots $\alpha_1,\ldots,\alpha_6$, the coefficients satisfy the usual identities: $s=s_1(\alpha_1,\ldots,\alpha_6),$ $t = s_2(\alpha_1,\ldots,\alpha_6),$ $u = s_3(\alpha_1,\ldots,\alpha_6),$ etc. We provide a list of the Newton identities in terms of coefficients $s,t,u$ and $q$.

\begin{enumerate}
    \item $p_1 = \sum \alpha_i = s.$
    \item $p_2 = \sum \alpha_i^2 = s^2-2t.$
    \item $p_3 = \sum \alpha_i^3 = s^3-3st+3u.$
    \item $p_4 = \sum \alpha_i^4 = s^4 - 4s^2t+2t^2+4su-4qt.$
    \item $p_5 = \sum \alpha_i^5 = s^5-5s^3t+5s^2u+5st^2-5tu-5qst+5q^2s.$
    \item $p_6 = \sum \alpha_i^6 = s^6-6s^4t+6s^3u+9s^2t^2-6qs^2t-12stu+6q^2s^2-2t^3+3u^2+6qt^2-6q^3$
\end{enumerate}

\newpage
\section{Statistics} \label{sec:stats}

We provide statistics on the obstructions enumerated in Section~\ref{sec:summary}. In Table~\ref{tab:rule_occurrences}, we list for each obstruction the total number of Weil polynomials over the fields of size $\leq25$ that satisfy the rule, as well as the number of Weil polynomials that satisfy only this rule and no others. Notably, none of the rules are superfluous. In Table~\ref{tab:category_detection}, for each category of Weil polynomials we give the proportion of isogeny classes of abelian threefolds over fields of size $\leq 25$ that do not contain hyperelliptic Jacobians that are not identified by any of the obstructions in Section~\ref{sec:summary}. The type is \texttt{(number of irreducible factors, $p$-rank)}.

\begin{table}[!htb]
\centering
\begingroup
\renewcommand{\arraystretch}{1.2} % uniform vertical spacing
\setlength{\tabcolsep}{6pt}       % optional: adjust inner cell padding

\begin{tabular}{@{}c@{\hspace{0.2em}}c@{}} % reduced gap between tables
% Left table
\begin{minipage}[t]{0.48\textwidth}
\centering
\begin{tabular}[t]{|c|c|c|}
  \hline
  \textbf{Rule} & \textbf{hits} & \textbf{unique hits} \\ \hline 
  \textbf{0.2.2.0} & 42 & 14 \\ \hline
  \textbf{0.3.1.0} & 32 & 4 \\ \hline
  \textbf{0.3.2.0} & 90 & 32 \\ \hline
  \textbf{0.3.2.1} & 24 & 6 \\ \hline
  \textbf{0.3.2.2} & 24 & 6 \\ \hline
  \textbf{0.N.0.0} & 164 & 86 \\ \hline
  \textbf{0.N.N.0} & 33370 & 32855 \\ \hline
  \textbf{1.1.0.0} & 6 & 6 \\ \hline
  \textbf{1.1.0.1} & 34 & 4 \\ \hline
  \textbf{1.2.1.0} & 52 & 49 \\ \hline
  \textbf{1.2.2.0} & 136 & 132 \\ \hline
  \textbf{1.3.1.0} & 38 & 24 \\ \hline
  \textbf{1.3.1.1} & 24 & 8 \\ \hline
  \textbf{1.3.2.0} & 64 & 42 \\ \hline
  \textbf{1.3.2.1} & 8 & 8 \\ \hline
  \textbf{1.3.N.0} & 78 & 48 \\ \hline
  \textbf{1.3.N.1} & 68 & 62 \\ \hline
  \textbf{1.N.N.0} & 245548 & 245250 \\ \hline
  \textbf{N.3.0.0} & 47 & 4 \\ \hline
  \textbf{N.3.0.1} & 50 & 10 \\ \hline
  \textbf{N.3.N.0} & 113 & 4 \\ \hline
  \textbf{N.N.N.0} & 1422 & 509 \\ \hline
  \textbf{N.N.N.1} & 4187 & 3543 \\ \hline
  \textbf{N.N.N.2} & 2328 & 1819 \\ \hline
\end{tabular}

\vspace{0.8em} % more space before caption
\caption{Rule occurrences and unique hits.}
\label{tab:rule_occurrences}
\end{minipage}
&
% Right table
\begin{minipage}[t]{0.48\textwidth}
\centering
\begin{tabular}[t]{|c|c|} 
  \hline
  \textbf{type} & \textbf{Undetected by rules} \\ \hline 
  (3, 0) & 8/142 = 0.0563 \\ \hline
  (3, 1) & 183/416 = 0.4399 \\ \hline
  (3, 2) & 108/659 = 0.1639 \\ \hline
  (3, 3) & 225/1524 = 0.1476 \\ \hline
  (2, 0) & 24/103 = 0.2330 \\ \hline
  (2, 1) & 576/870 = 0.6621 \\ \hline
  (2, 2) & 1264/6117 = 0.2066 \\ \hline
  (2, 3) & 3808/15224 = 0.2501 \\ \hline
  (1, 0) & 0/122 = 0.0000 \\ \hline
  (1, 1) & 154/1040 = 0.1481 \\ \hline
  (1, 2) & 2758/27174 = 0.1015 \\ \hline
  (1, 3) & 6009/247844 = 0.0242 \\ \hline
  \textbf{Total} & 15117/301235 = 0.0502 \\ \hline
\end{tabular}

\vspace{0.8em} % more space before caption
\caption{Category detection \\rates by type.}
\label{tab:category_detection}
\end{minipage}
\end{tabular}

\endgroup
\end{table}

\newpage

\pagebreak

\printbibliography

\end{document}